\newtheorem{theorem}{Theorem}[section]
\theoremstyle{definition}
\theoremstyle{remark}
\numberwithin{equation}{section}
\begin{document}
\title[Not a Quotient]{Is the Dirichlet Space a Quotient of $DA_{n}$?}
\author{Richard Rochberg}
\email{rr@math.wustl.edu}
\subjclass[2000]{Primary 46E22}
\date{}
\keywords{Reproducing kernel Hilbert space. pseudohyperbolic metric}
\dedicatory{To Bj\"{o}rn Jawerth, for many good memories.}

\begin{abstract}
We show that the Dirichlet space is not a quotient of the Drury-Arveson
space on the $n-$ball for any finite $n.$ The proof is based a quantitative
comparison of the metrics induced by the Hilbert spaces
\end{abstract}

\maketitle

\section{Statement of the Result}

We will consider reproducing kernel Hilbert spaces, RKHS's, on the balls $%
\mathbb{B}^{n}\subset \mathbb{C}^{n},$ $n=1,2,...,\infty .$ We interpret $%
\mathbb{C}^{\infty }$ as the space $\ell ^{2}(\mathbb{Z}_{+})$ and $\mathbb{B%
}^{\infty }$ as its unit ball.

We are interested in $\mathcal{D}$, the Dirichlet space, which consists of
holomorphic functions $f$ defined on the unit disk $\mathbb{B}^{1}\mathbb{=D}%
,$ $f(z)=\sum a_{n}z^{n},$ normed using $\left\Vert f\right\Vert _{\mathcal{D%
}}^{2}=\sum (n+1)\left\vert a_{n}\right\vert ^{2}.$ The space has a
reproducing kernel, $h_{z},$ for evaluating functions at $z,$ given by%
\begin{eqnarray*}
h_{z}(w) &=&\frac{1}{\bar{z}w}\log \frac{1}{1-\bar{z}w},\text{ \ }zw\neq 0,
\\
&=&1,\qquad \text{\ \ \ }\qquad \text{\ \ \ \ \ }zw=0
\end{eqnarray*}%
We denote the normalized kernels by $\widehat{h_{z}}.$

The other spaces we consider are the Drury-Arveson spaces, $%
DA_{n},1,2,...,\infty .$ The space $DA_{n}$ is the Hilbert space of
holomorphic functions on $\mathbb{B}^{n}$ which is defined by the
reproducing kernel $j_{z}^{(n)}$; for $z,w\in \mathbb{B}^{n},$%
\begin{equation*}
j_{z}^{(n)}(w)=\frac{1}{1-\left\langle w,z\right\rangle }.
\end{equation*}%
Here $\left\langle w,z\right\rangle $ is the standard inner product on $%
\mathbb{C}^{n}.$ We will denote the normalized kernels by $\widehat{%
j_{x}^{(n)}}$ and generally omit "$(n)".$

We are interested in the following

\begin{description}
\item[Question 1] Is there, \textit{for some} \textit{finite }$n,$ a map $%
\Phi :\mathbb{B}^{1}\rightarrow \mathbb{B}^{n},$ and a positive function $%
\lambda $ defined on $\mathbb{B}^{1}$ so that for all $z,w\in \mathbb{B}^{1}$
\begin{equation}
h(w,z)=\lambda (z)\lambda (w)j^{(n)}(\Phi (w),\Phi (z))\text{ \ }?
\label{equation}
\end{equation}
\end{description}

The main result in this paper is that Question 1 has a negative answer.

\section{Background}

The spaces $\mathcal{D}$ and $DA_{n}$ have irreducible complete Pick
kernels, CPK. An introduction to such spaces is in \cite{AgMc} and we will
make free use of the results there. More recent information is in \cite{Sha}.

When the general theory of RKHS with CPK is applied to the space $\mathcal{D}
$ it insures that the variation of Question 1 in which $n$\textit{\ is not
required to be finite} has a positive answer. That result holds in general;
if $K$ is a RKHS of functions on a space $\mathbb{X}$, and if $K$ has a CPK,
then there is a map $\Phi _{\mathbb{X}}:\mathbb{X}\rightarrow \mathbb{B}%
^{n(K)}$ so that the analog of (\ref{equation}) holds. Again, $n(K)=\infty $
may be required.

Given such a $\Phi _{\mathbb{X}}$ we define its range, $\limfunc{Ran}(\Phi _{%
\mathbb{X}}),$ to be the image of $\mathbb{X}$ in $\mathbb{B}^{n(K)}.$ Let $%
\limfunc{Span}(\Phi _{\mathbb{X}}(\mathbb{X))}$ be the closed linear span of
the $DA_{n(K\mathbb{)}}-$kernel functions for $x\in \limfunc{Ran}(\Phi _{%
\mathbb{X}}),$ and let $\limfunc{Van}(\Phi _{\mathbb{X}}(\mathbb{X))}$ be
the space of functions in $DA_{n(K\mathbb{)}}$ which vanish on $\limfunc{Ran}%
(\Phi _{\mathbb{X}}).$ That is:%
\begin{eqnarray*}
\limfunc{Ran}(\Phi _{\mathbb{X}}) &=&\left\{ \Phi _{\mathbb{X}}(x\mathbb{)}%
:x\in \mathbb{X}\right\} \subset \mathbb{B}^{n(K)}, \\
\limfunc{Span}(\Phi _{\mathbb{X}}\mathbb{)} &=&\text{closed span of }\left\{
j_{\zeta }^{n(K)}:\xi \in \limfunc{Ran}(\Phi _{\mathbb{X}})\right\} \subset
DA_{n(K\mathbb{)}}, \\
\limfunc{Van}(\Phi _{\mathbb{X}}(\mathbb{X))} &\mathbb{=}&\left\{ f\in
DA_{n(K\mathbb{)}}:f(\xi )=0\text{ \ }\forall \xi \in \limfunc{Ran}(\Phi _{%
\mathbb{X}})\right\} \subset DA_{n(K\mathbb{)}}.
\end{eqnarray*}%
The map which takes kernel functions for $K$ to kernel functions in $DA_{n(K%
\mathbb{)}},$ $k_{x}\rightarrow j_{\Phi _{\mathbb{X}}(x\mathbb{)}}^{n(K)},$
extends by linearity and continuity to a surjective isometry from $K$ to $%
\limfunc{Span}(\Phi _{\mathbb{X}}\mathbb{)}.$ Also, considering the
definitions we see that $\limfunc{Span}(\Phi _{\mathbb{X}}\mathbb{)}^{\perp
}=\limfunc{Van}(\Phi _{\mathbb{X}}(\mathbb{X))}$. Combining these
observations we have that $K$ is the Hilbert space quotient of $DA_{n(K%
\mathbb{)}}$ by $\limfunc{Van}(\Phi _{\mathbb{X}}(\mathbb{X))}:$ 
\begin{eqnarray*}
K &\approx &DA_{n(K\mathbb{)}}\ominus \limfunc{Van}(\Phi _{\mathbb{X}}(%
\mathbb{X))} \\
&\approx &\limfunc{Van}(\Phi _{\mathbb{X}}(\mathbb{X))}^{\perp } \\
&\approx &\limfunc{Span}(\Phi _{\mathbb{X}}\mathbb{)}
\end{eqnarray*}%
This representation of $K$ as a quotient of $DA_{n(K\mathbb{)}}$ is the
source of the title of this paper.

In this situation it is natural to wonder about the optimal value of $n(K)$
for given $K.$ The author learned of this question a few years ago in
discussions with Ken Davidson and Orr Shalit, and the results here have
their origin in those conversations. These and similar questions have also
been considered by John McCarthy and Orr Shalit \cite{McSh}.

\section{A Reformulation Using the Metric $\protect\delta $}

We will recast this Hilbert space question as one about isometric mappings
between metric spaces.

Suppose $K$ is a RKHS of functions on $\mathbb{X}$ with reproducing kernels $%
\left\{ k_{x}:x\in \mathbb{X}\right\} $ and normalized reproducing kernels $%
\left\{ \widehat{k_{x}}\right\} .$ Define, for all $x,w\in \mathbb{X}$

\begin{equation}
\delta (x,w)=\delta _{K}(x,w)=\sqrt{1-\left\vert \left\langle \widehat{k_{x}}%
,\widehat{k_{w}}\right\rangle \right\vert ^{2}}.  \label{def delta}
\end{equation}%
For any $x\in \mathbb{X}$ let $P_{x}$ be the Hilbert space projection of $K$
onto the span of $k_{x}.$

\begin{proposition}[Coburn \protect\cite{Cob}]
$\delta (z,w)=\left\Vert P_{z}-P_{w}\right\Vert .$ In particular $\delta $
is a metric on $\mathbb{X}.$
\end{proposition}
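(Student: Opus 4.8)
The plan is to reduce everything to a computation with a single pair of rank-one projections. Write $u=\widehat{k_{z}}$ and $v=\widehat{k_{w}}$, two unit vectors in $K$, so that $P_{z}$ is the rank-one orthogonal projection $\xi \mapsto \langle \xi ,u\rangle u$ and $P_{w}$ is $\xi \mapsto \langle \xi ,v\rangle v$. First I would dispose of the degenerate case: if $\left\vert \langle u,v\rangle \right\vert =1$, then by the equality case of the Cauchy--Schwarz inequality $v$ is a unimodular scalar multiple of $u$, hence $P_{z}=P_{w}$ and both sides of the claimed identity vanish. So from now on assume $\left\vert \langle u,v\rangle \right\vert <1$; then $u,v$ are linearly independent and $M:=\operatorname{span}\{u,v\}$ is two-dimensional.

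The operator $T:=P_{z}-P_{w}$ is self-adjoint, its range lies in $M$, and it annihilates $M^{\perp}$ (any $\xi$ orthogonal to both $u$ and $v$ is killed by $P_{z}$ and by $P_{w}$). Hence $T$ is determined by its restriction to $M$, a self-adjoint operator on a two-dimensional space, and so it has at most two nonzero eigenvalues. Two trace identities pin them down. First, $\operatorname{tr}T=\operatorname{tr}P_{z}-\operatorname{tr}P_{w}=1-1=0$. Second, a direct computation in an orthonormal basis gives $\operatorname{tr}(P_{z}P_{w})=\operatorname{tr}(P_{w}P_{z})=\left\vert \langle u,v\rangle \right\vert ^{2}$, whence $\operatorname{tr}(T^{2})=\operatorname{tr}P_{z}+\operatorname{tr}P_{w}-2\operatorname{tr}(P_{z}P_{w})=2\bigl(1-\left\vert \langle u,v\rangle \right\vert ^{2}\bigr)$. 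Since the eigenvalues of $T$ sum to $0$, they are $\pm\lambda$ for some $\lambda \geq 0$, and then $2\lambda ^{2}=\operatorname{tr}(T^{2})$ forces $\lambda =\sqrt{1-\left\vert \langle u,v\rangle \right\vert ^{2}}$. Therefore $\left\Vert P_{z}-P_{w}\right\Vert =\left\Vert T\right\Vert =\lambda =\delta (z,w)$.

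The metric claim is then immediate from this operator-norm representation: symmetry and nonnegativity are clear, $\delta (z,z)=\left\Vert P_{z}-P_{z}\right\Vert =0$, and the triangle inequality is inherited from $\left\Vert P_{z}-P_{w}\right\Vert \leq \left\Vert P_{z}-P_{x}\right\Vert +\left\Vert P_{x}-P_{w}\right\Vert $. That $\delta (z,w)=0$ implies $z=w$, rather than merely $P_{z}=P_{w}$, uses the standing assumption that distinct points of $\mathbb{X}$ have non-proportional reproducing kernels, which holds for all the spaces considered here. The computation itself is routine; the only point demanding a little care is the reduction to the two-dimensional subspace $M$, that is, verifying that $P_{z}-P_{w}$ vanishes off $M$ so that the trace identities compute the full operator norm rather than only a lower bound for it.
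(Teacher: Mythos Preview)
Your argument is correct and complete. The paper itself does not supply a proof but simply refers the reader to \cite{Cob} and \cite{ARSW}; your self-contained computation via the trace identities $\operatorname{tr}(P_{z}-P_{w})=0$ and $\operatorname{tr}(P_{z}-P_{w})^{2}=2(1-|\langle \widehat{k_{z}},\widehat{k_{w}}\rangle|^{2})$ on the two-dimensional span is the standard route and is essentially what one finds in those references.
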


\begin{proof}
See \cite{Cob} or \cite{ARSW}
\end{proof}

This metric will be our main tool. If $K=\mathcal{D}$ the formula for $%
\delta _{\mathcal{D}}$ does not simplify algebraically. On the other hand,
there are informative algebraic rewritings of the formula for $\delta
_{DA_{n}}$.

We begin with the case $n=1.$ Note that $DA_{1}$ is the classical Hardy
space of the disk. Using the definitions we find that for $z,w\in \mathbb{B}%
^{1}=\mathbb{D}$, 
\begin{equation*}
\delta _{DA_{1}}(z,w)=\sqrt{1-\frac{(1-\left\vert z\right\vert
^{2})(1-\left\vert w\right\vert ^{2})}{\left\vert 1-\bar{z}w\right\vert ^{2}}%
}.
\end{equation*}%
When we use the fundamental identity, for points $x,w\in \mathbb{B}^{1}=%
\mathbb{D}\ $%
\begin{equation}
1-\frac{(1-\left\vert z\right\vert ^{2})(1-\left\vert w\right\vert ^{2})}{%
\left\vert 1-\bar{z}w\right\vert ^{2}}=\left\vert \frac{z-w}{1-\bar{z}w}%
\right\vert ^{2},  \label{FI}
\end{equation}%
we find that $\delta _{H^{2}}=\rho _{1},$ the pseudohyperbolic metric on the
disk;%
\begin{equation*}
\rho _{1}(z,w)=\left\vert \frac{z-w}{1-\bar{z}w}\right\vert .
\end{equation*}%
That metric is characterized by the fact that for any $z\in \mathbb{D},$ $%
\rho _{1}(0,z)=\left\vert z\right\vert $ together with the fact that $\rho
_{1}$ is invariant under holomorphic automorphisms of the disk.

For general $n$ we have something very similar. From the definitions we see
that for $z,w\in \mathbb{B}^{n}$, 
\begin{eqnarray*}
\delta _{DA_{n}}(z,w) &=&\sqrt{1-\left\vert \left\langle \widehat{j_{z}},%
\widehat{j_{w}}\right\rangle \right\vert ^{2}} \\
&=&\sqrt{1-\frac{(1-\left\Vert z\right\Vert ^{2})(1-\left\Vert w\right\Vert
^{2})}{\left\vert 1-\bar{z}w\right\vert ^{2}}}.
\end{eqnarray*}

Although it is less well known, there is also a pseudohyperbolic metric $%
\rho _{n}$ on $\mathbb{B}^{n}.$ For our purposes a good reference on that
metric is \cite{DuWe}. There is an identity for simplifying the expression
for $\delta _{DA_{n}},$ similar to but more complicated than (\ref{FI}).
With it one finds that $\delta _{DA_{n}}=\rho _{n}.$ In analogy with $n=1,$ $%
\rho _{n}$ is characterized by knowing that for any $z\in \mathbb{B}^{n}$ 
\begin{equation}
\rho _{n}(0,z)=\left\Vert z\right\Vert  \label{near boundary}
\end{equation}%
and that $\rho _{n}$ is invariant under holomorphic automorphisms of the
ball.

Although we will not use this fact, we note in passing that the metric space
($\mathbb{B}^{n},\delta _{DA_{n}})=(\mathbb{B}^{n},\rho _{n})$ is a standard
model for complex hyperbolic geometry, \cite{Gol}.

Suppose now that Question 1 has a positive answer, and let $\Phi ,$ $\lambda 
$ be the objects guaranteed by that answer. Using (\ref{equation}) and the
previous discussion of the $\delta $'s and $\rho $'s, we would have, for all 
$z,w\in \mathbb{D}$,%
\begin{eqnarray*}
\delta _{\mathcal{D}}(z,w) &=&\delta _{DA_{n}}(\Phi (z),\Phi (w)) \\
&=&\rho _{n}(\Phi (z),\Phi (w))
\end{eqnarray*}%
(The factors of $\lambda $ all cancel.) Hence, to get a negative answer to
Question 1 it suffices to get a negative answer to the following question:

\begin{description}
\item[Question 2] Is there a finite $n$ for which there is an isometric
mapping $\Phi $ of the metric space ($\mathbb{D},\delta _{\mathcal{D}})$
into the metric space ($\mathbb{B}^{n},\delta _{DA_{n}})?$
\end{description}

We will now give a negative answer to that question.

\begin{theorem}
Question 2, and hence also Question 1, have negative answers.
\end{theorem}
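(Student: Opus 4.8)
The plan is to compare the two metric spaces through a local packing statistic which is isometry invariant and, for each of the spaces $(\mathbb{B}^{n},\delta_{DA_{n}})$ with $n$ finite, is \emph{uniformly bounded as the base point varies}, whereas for $(\mathbb{D},\delta_{\mathcal{D}})$ it is not. Given a metric space $(X,d)$, a point $p\in X$, and radii $0<\epsilon<r$, let $N(p;r,\epsilon)$ denote the maximal number of points of the ball $B(p,r)$ that are pairwise at distance at least $\epsilon$. Fix, for concreteness, $r=\tfrac12$ and $\epsilon=\tfrac14$ (any $0<\epsilon<r<1$ would do). I will show: (A) for every finite $n$ there is a constant $C(n)<\infty$ with $N_{DA_{n}}(p;r,\epsilon)\le C(n)$ for all $p\in\mathbb{B}^{n}$; and (B) $\sup_{p\in\mathbb{D}}N_{\mathcal{D}}(p;r,\epsilon)=\infty$. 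These give the theorem: if $\Phi$ were an isometry of $(\mathbb{D},\delta_{\mathcal{D}})$ into some $(\mathbb{B}^{n},\delta_{DA_{n}})$ as in Question 2, pick $p\in\mathbb{D}$ with $N_{\mathcal{D}}(p;r,\epsilon)>C(n)$ and $\epsilon$-separated points $w_{0},\dots,w_{m}\in B_{\delta_{\mathcal{D}}}(p,r)$ with $m\ge C(n)$; then $\Phi(w_{0}),\dots,\Phi(w_{m})$ are $\epsilon$-separated points of $B_{\delta_{DA_{n}}}(\Phi(p),r)$, so $N_{DA_{n}}(\Phi(p);r,\epsilon)>C(n)$, contradicting (A). Hence Question 2, and therefore Question 1, have negative answers.

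Step (A) is where the homogeneity of the Drury--Arveson metric enters. As recalled in the previous section, $\delta_{DA_{n}}=\rho_{n}$ is invariant under the holomorphic automorphism group of $\mathbb{B}^{n}$, and that group is transitive, so $N_{DA_{n}}(p;r,\epsilon)=N_{DA_{n}}(0;r,\epsilon)$ for every $p$. Since $\delta_{DA_{n}}(0,z)=\lVert z\rVert$, the ball $B_{\delta_{DA_{n}}}(0,r)$ sits inside the compact set $\{\lVert z\rVert\le r\}\subset\mathbb{B}^{n}$, on which $\rho_{n}$ is a continuous metric; an infinite $\epsilon$-separated subset would contain a Euclidean-convergent subsequence along which $\rho_{n}$-distances tend to $0$, an impossibility. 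Thus $N_{DA_{n}}(0;r,\epsilon)<\infty$, and this number serves as $C(n)$ (only its finiteness is used).

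Step (B) is the substantive part, and the one I expect to be the main obstacle, because near $\partial\mathbb{D}$ there is no closed form for $\delta_{\mathcal{D}}$. I will place all the points on one circle $\lvert z\rvert=1-s$, clustered near $z=1$, with $s>0$ small. From the formula for $h_{z}$ one gets, for $z,w$ on this circle with angular separation $\Delta\theta$ in the range $2s\ll\lvert\Delta\theta\rvert\ll 1$, the estimate
\[
\bigl\lvert\langle\widehat{h_{z}},\widehat{h_{w}}\rangle\bigr\rvert=\frac{\log\frac{1}{\lvert\Delta\theta\rvert}+O(1)}{\log\frac{1}{2s}+O(s)},
\]
the $O(1)$ absolute, coming from $\lvert 1-\bar{z}w\rvert\asymp\lvert\Delta\theta\rvert$ and $\lvert\arg(1-\bar{z}w)\rvert\le\pi/2$. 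Put $L=\log\frac{1}{2s}$, fix exponents $\sqrt{1-r^{2}}<b<a<\sqrt{1-\epsilon^{2}}$ (possible since $\epsilon<r$), and set $A=(2s)^{a}$, $B=(2s)^{b}$, so that $2s\ll A<B\ll 1$ once $s$ is small. Take the base point $p$ at angle $0$ and points $w_{j}$ at angle $jA$ for $1\le j\le N:=\lfloor(2s)^{b-a}\rfloor=\lfloor B/A\rfloor$. All pairwise angular separations among $p,w_{1},\dots,w_{N}$ lie in $[A,B]$, so $\log(1/\lvert\Delta\theta\rvert)\in[bL,aL]$ for each such pair, and the displayed estimate yields, for $s$ small enough, $\bigl\lvert\langle\widehat{h_{w_{i}}},\widehat{h_{w_{j}}}\rangle\bigr\rvert\le a+O(1/L)<\sqrt{1-\epsilon^{2}}$ and $\bigl\lvert\langle\widehat{h_{p}},\widehat{h_{w_{j}}}\rangle\bigr\rvert\ge b-O(1/L)>\sqrt{1-r^{2}}$; equivalently $\delta_{\mathcal{D}}(w_{i},w_{j})\ge\epsilon$ and $\delta_{\mathcal{D}}(p,w_{j})<r$. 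So $B_{\delta_{\mathcal{D}}}(p,r)$ contains the $N+1$ pairwise $\epsilon$-separated points $p,w_{1},\dots,w_{N}$, and $N+1\ge(2s)^{b-a}\to\infty$ as $s\to0$ because $a>b$. This is (B).

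The content of (B) --- hence of the theorem --- is thus concentrated in one quantitative lemma: that the normalized Dirichlet kernels on a circle $\lvert z\rvert=1-s$ obey the displayed asymptotic with \emph{explicit, uniform} error terms. Once that is proved, the rest is bookkeeping, and the only requirement on $r,\epsilon$ is $0<\epsilon<r<1$, which is exactly what makes the exponent window $(\sqrt{1-r^{2}},\sqrt{1-\epsilon^{2}})$ nonempty; it is also what breaks down at $n=\infty$, where $B_{\delta_{DA_{\infty}}}(0,r)$ is no longer relatively compact and (A) fails --- consistent with $\mathcal{D}$ being a quotient of $DA_{\infty}$.
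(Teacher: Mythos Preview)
Your argument is correct and reaches the same conclusion as the paper, but by a genuinely different route. Both proofs exploit the same phenomenon---near $\partial\mathbb{D}$ the metric $\delta_{\mathcal{D}}$ accommodates far more separated points than $(\mathbb{B}^{n},\rho_{n})$ can---yet they organize the comparison differently. The paper keeps the base point at the origin and lets the scales shrink with a parameter $K$: it places $N\gtrsim e^{K}$ points on a circle of radius $1-\sigma$ (with $\sigma\sim e^{-K}$), all at $\delta_{\mathcal{D}}$-distance $\sqrt{1-1/K}$ from $0$ and pairwise $\delta_{\mathcal{D}}$-distance $\gtrsim K^{-1/2}$, and then on the $\mathbb{B}^{n}$ side uses the explicit hyperbolic volume formula $V(r)=r^{2n}(1-r^{2})^{-n}$ together with the strengthened (M\"obius) triangle inequality for $\rho_{n}$ to get the contradiction $e^{K}(A/K)^{n}\le (BK)^{n}$. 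You instead freeze the scales $r,\epsilon$ and push the base point $p$ to the boundary, showing that the $\epsilon$-packing number of $B_{\delta_{\mathcal{D}}}(p,r)$ tends to infinity; on the $\mathbb{B}^{n}$ side you then need only transitivity of $\mathrm{Aut}(\mathbb{B}^{n})$ and compactness of $\{\lVert z\rVert\le r\}$, with no volume computation and no strong triangle inequality. The price is a weaker count---your $N\sim (2s)^{b-a}$ grows only like a small power of $1/s$, whereas the paper gets $e^{K}\sim 1/s$---but for the qualitative statement this is irrelevant, and your version is more elementary and arguably more portable. Your kernel estimate $|\langle\widehat{h_{z}},\widehat{h_{w}}\rangle|=\bigl(\log(1/|\Delta\theta|)+O(1)\bigr)/\bigl(\log(1/2s)+O(s)\bigr)$ in the regime $2s\lesssim|\Delta\theta|\ll 1$ is correct and follows, as you indicate, from $|1-\bar z w|\asymp|\Delta\theta|$ (via $|1-\rho e^{i\Delta\theta}|^{2}=(1-\rho)^{2}+4\rho\sin^{2}(\Delta\theta/2)$) and the bounded argument of $1-\bar z w$.
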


\section{Preliminary Estimates}

We want to estimate $\delta _{\mathcal{D}}(0,z)$ for $z$ near the boundary.
The situation is obviously rotation invariant so without loss of generality
we suppose $z$ is real and positive. For convenience we write $z=1-\sigma $
where $\sigma $ is defined by $2\sigma -\sigma ^{2}=e^{-K}$ for some
positive $K.$ Thus $1-z^{2}=e^{-K}.$ We then have, with $K$ large

\begin{eqnarray}
1-\delta _{\mathcal{D}}^{2}(0,z) &=&\frac{1}{-\log (1-z^{2})}=\frac{1}{K},%
\text{ }  \notag \\
\delta _{\mathcal{D}}(0,z) &=&\sqrt{1-\frac{1}{K}}.
\label{distance to center}
\end{eqnarray}%
Now we estimate the $\delta _{\mathcal{D}}$ distance that $z$ is moved by a
rotation through the small angle $\sigma ;$ that is, we want to estimate $%
\delta _{\mathcal{D}}(z,ze^{i\sigma }).$ From the definitions we have 
\begin{equation*}
1-\delta _{\mathcal{D}}^{2}(z,ze^{i\sigma })=\left\vert \frac{\log \left(
1-z^{2}e^{i\sigma }\right) }{\log (1-z^{2})}\right\vert ^{2}
\end{equation*}%
We have $z^{2}=1-2\sigma +\sigma ^{2}.$ Using the Taylor series for $\log
(1-x)$ and for $\exp x$ we find 
\begin{eqnarray*}
1-\delta _{\mathcal{D}}^{2}(z,ze^{i\sigma }) &=&\left\vert \frac{\log \left[
(2-i)\sigma +\left( 2i-\frac{1}{2}\right) \sigma ^{2}+O(\sigma ^{3})\right] 
}{\log e^{-K}}\right\vert ^{2} \\
&=&\left\vert \frac{\log (2-i)+\log \sigma +O(\sigma )}{-K}\right\vert ^{2}.
\end{eqnarray*}%
We now use the estimate $\log \sigma =-K-\log 2+O(\sigma )$ and continue with

\begin{equation*}
1-\delta _{\mathcal{D}}^{2}(z,ze^{i\sigma })=\left\vert \frac{-K+\log
(1-i/2)+O(\sigma )}{-K}\right\vert ^{2}.
\end{equation*}%
Write $\log (1-i/2)=A+iB$ with $A,B$ real and $A>0.$ Hence, with $O(1)$ and $%
O(\sigma )$ denoting \textit{real} quantities, we have%
\begin{eqnarray}
1-\delta _{\mathcal{D}}^{2}(z,ze^{i\sigma }) &=&\left\vert 1-\frac{%
A+O(\sigma )}{K}+i\frac{O(1)}{K}\right\vert ^{2}.
\label{distance to each other} \\
&=&1-\frac{2A}{K}+\frac{O(1)}{K^{2}}.  \notag
\end{eqnarray}

The invariant ( Poincare-Bergman, hyperbolic) volume of a pseudohyperbolic
ball of radius $r$ is a function of the radius only, not the center. If the
center is selected to be the origin then one can compute the volume
explicitly. We record the formula from \cite[(???)]{DuWe}, if the radius is $%
r$ then the volume is 
\begin{equation}
V(r)=\frac{r^{2n}}{\left( 1-r^{2}\right) ^{n}}.  \label{volume}
\end{equation}%
In particular%
\begin{eqnarray}
V(r) &=&r^{2n}+O(r^{2n+1})\text{ \ \ \ as \ }r\rightarrow 0
\label{asymptotic} \\
&=&2^{n}(1-r)^{-n}+O((1-r)^{-n+1})\text{ \ \ as }r\rightarrow 1  \notag
\end{eqnarray}

\section{Proof of the Theorem}

\begin{proof}
Suppose the answer to Question 2 is positive. Select and fix a finite $n$
and a map $\Phi $ whose existences are insured by that answer.

Fix a large positive $K$ with the property that, with $\sigma $ defined as
above, $N=2\pi /\sigma $ is an integer.

Consider the circle centered at the origin and with radius $1-\sigma .$ On
that circle select $N$ equally spaced points $\left\{ z_{i}\right\} .$ Now
consider the image points $\left\{ \Phi (z_{i})\right\} .$ Because $\Phi $
is an isometry, and noting (\ref{distance to center}), we have 
\begin{equation*}
\delta _{DA}(0,\Phi (z_{i}))=\delta _{\mathcal{D}}(0,z_{i})=\sqrt{1-\frac{1}{%
K}}
\end{equation*}%
Hence all of the $\left\{ \Phi (z_{i})\right\} $ lie on the sphere $S$ in $%
\mathbb{B}^{n}$ centered at the origin and with $\delta _{DA_{n}}$ radius $%
\sqrt{1-1/K}.$ Also, again using the isometry property, and now noting (\ref%
{distance to each other}), we have, for $z_{i}\neq z_{j},$ and for some $B>0$%
\begin{equation*}
\delta _{DA}(\Phi (z_{i}),\Phi (z_{j}))=\delta _{\mathcal{D}}(z_{i},z_{j})>%
\sqrt{B/K}.
\end{equation*}%
Hence if we pick and fix a small $C>0,$ then $\delta _{DA}$ balls $\left\{
B_{i}\right\} $ centered at the points $\left\{ \Phi (z_{i})\right\} $ and
having radius $\sqrt{C/K}$ will be disjoint.

These balls have centers on $S,$ the boundary of the ball $B_{\delta
_{DA}}(0,\sqrt{1-1/K}),$ and hence certainly do not lie inside that ball.
However they do lie inside a slightly large concentric ball whose radius we
now estimate. The metric $\delta _{DA_{n}}$ satisfies a strengthened version
of the triangle inequality, \cite[equ. (**)]{DuWe}. Hence if $\zeta $ is
inside one of the $B_{i}$ then, recalling that we are interested in large $K$%
, we can make the following estimates; the first line is the strengthened
triangle inequality for $\delta _{DA}.$ 
\begin{eqnarray*}
\delta _{DA}(0,\zeta ) &\leq &\frac{\delta _{DA}(0,\Phi (z_{i}))+\delta
_{DA}(\Phi (z_{i}),\zeta )}{1+\delta _{DA}(0,\Phi (z_{i}))\delta _{DA}(\Phi
(z_{i}),\zeta )} \\
&\leq &\frac{\sqrt{1-\frac{1}{K}}+\sqrt{\frac{C^{2}}{K}}}{1+\sqrt{1-\frac{1}{%
K}}\sqrt{\frac{C^{2}}{K}}} \\
&=&1-\left( \frac{1}{2}+C^{2}\right) \frac{1}{K}+O\left( \frac{1}{K^{3/2}}%
\right) \\
&\leq &1-\frac{1}{3K}.
\end{eqnarray*}

Thus we have $N=2\pi /\sigma $ small balls inside $B_{\delta
_{DA}}(0,1-1/\left( 3K\right) ).$ By our estimates for their radius and for
the distance between their centers we see that the small balls are disjoint.
Finally, we have an estimate for the number of them; if $K$ is large then $%
\sigma <2\pi e^{-K}$ and hence $N>e^{K}.$ However, by comparing volumes, we
see that this combination of estimates in impossible.

From (\ref{asymptotic}) we find that, with $A$ and $B$ positive constants
that are independent of $n,$ and also independent of $K$ if $K$ is large, we
have:%
\begin{eqnarray*}
N &=&\text{ number of small balls}\geq e^{K} \\
V_{S} &=&\text{ volume of each small ball }\geq \left( \frac{A}{K}\right)
^{n} \\
V_{L} &=&\text{ volume of large ball }\leq \left( BK\right) ^{n}.
\end{eqnarray*}%
We must have $NV_{S}\leq V_{L}$ for all $K,$ no matter how large; but the
previous estimates show this is impossible, no matter what the values of $%
A,B.$
\end{proof}

\section{Final Remarks}

Although the result just proved and various related results are proved more
directly in \cite{McSh}, this use of $\delta $ provides a different insight
into what is going on.

It is not clear how general the argument is. It does not seem to apply
directly to the Hilbert spaces $\mathcal{D}_{\alpha }$, $0<\alpha <1,$ which
are defined by the kernel functions $(1-\bar{z}w)^{-\alpha }.$ Perhaps this
is not surprising. The space $\mathcal{D}$ is formally a limiting case of
these spaces as $\alpha \rightarrow 0,$ However it is known that the metric $%
\delta _{\mathcal{D}}$ is fundamentally different from $\delta _{\mathcal{D}%
_{\alpha }},$ $0<\alpha <1.$ That difference is discussed in \cite{Roc}. The
theorem is trivially false for $\alpha =1.$

On the other hand the argument seems to give a similar result for the spaces 
$H_{n},$ the HSRKs of functions on the disk defined by the reproducing
kernels%
\begin{eqnarray*}
h_{z}^{(n)}(w) &=&\left( \frac{1}{\bar{z}w}\log \frac{1}{1-\bar{z}w}\right)
^{n},\text{ \ }zw\neq 0, \\
&=&1,\qquad \text{\ \ \ }\qquad \qquad zw=0
\end{eqnarray*}%
for $n=2,3,....$ Those spaces are studied in \cite{AMPRS}.

It seems plausible that there are local, or even infinitesimal, versions of
this argument. Such a result might say that mapping a small $(\mathbb{B}%
^{1}, $ $\delta _{\mathcal{D}})$ neighborhood of $z$ into $(\mathbb{B}^{n},$ 
$\delta _{DA_{n}}),$ even approximately isometrically, is increasingly
difficult, and eventually impossible, as $z$ approaches the boundary.

An infinitesimal version might involve a curvature obstacle. One can pass to
the Riemannian metric which is the infinitesimal version of the $\delta
^{\prime }s.$ Some discussion of this is in \cite{ARSW} and the references
there. Starting from the pseudohyperbolic metric on the ball this produces
the classical Bergman-Poincare metric, the sectional curvatures of which are
always be between two negative constants. There are formulas for the
analogous curvatures related to a general metric $\delta _{K}$ on the disk
but the formulas are daunting. If $k(z,\bar{z})$ is the kernel function then
the Riemannian metric is $\alpha \left\vert dz\right\vert $ with $\alpha
^{2}=\Delta \log k.$ The curvature is then $\kappa =\left( -\Delta \log
\alpha \right) /\alpha ^{2}$.

Finally, it would be interesting to recast these ideas in purely Hilbert
space terms. The metric $\delta $ measures the sine of the angle between
reproducing kernels. Hence the analogs of the "approximate isometries" on
the metric spaces would be linear maps between spans of sets of kernel
functions, where the maps would be subject to an appropriate rigidity
constraint.

\end{document}